
\documentclass[12pt]{amsart}

\usepackage{amsmath,amssymb,epic,lscape}

\newtheorem{theorem}{Theorem}[section]
\newtheorem{proposition}[theorem]{Proposition}
\newtheorem{lemma}[theorem]{Lemma}

\theoremstyle{definition}
\newtheorem{definition}[theorem]{Definition}

\theoremstyle{remark}

\numberwithin{equation}{section}
\providecommand{\bysame}{\leavevmode\hbox to3em{\hrulefill}\thinspace}

\def\DJ{{\hbox{D\kern-.8em\raise.15ex\hbox{--}\kern.35em}}}
\def\DJo{$\;$\kern-.4em
    \hbox{D\kern-.8em\raise.15ex\hbox{--}\kern.35em okovi\'c}}

\def\NSERC{Supported in part by an NSERC Discovery Grant.}

\def\al{{\alpha}}

\def\ve{{\varepsilon}}
\def\vf{{\varphi}}

\def\bZ{{\mbox{\bf Z}}}

\def\pE{{\mathcal E}}

\renewcommand{\subjclassname}{\textup{2000} Mathematics Subject
Classification }

\begin{document}

\title[Classification of near-normal sequences]
{Classification of near-normal sequences}

\author[D.\v{Z}. \DJ okovi\'{c}]
{Dragomir \v{Z}. \DJ okovi\'{c}}

\address{Department of Pure Mathematics, University of Waterloo,
Waterloo, Ontario, N2L 3G1, Canada}

\email{djokovic@uwaterloo.ca}

\thanks{\NSERC}

\keywords{Base sequences, near-normal sequences, canonical form}

\date{}

\begin{abstract}
We introduce a canonical form for near-normal sequences $NN(n)$, 
and using it we enumerate the equivalence classes of such sequences
for even $n\le30$.
\end{abstract}

\maketitle
\subjclassname{ 05B20, 05B30 }
\vskip5mm

\section{Introduction}

Near-normal sequences were introduced by C.H. Yang in \cite{Y}.
They can be viewed as quadruples of binary sequences
$(A;B;C;D)$ where $A$ and $B$ have length $n+1$, while
$C$ and $D$ have length $n$, and $n$ has to be even.
By definition, the sequences $A=a_1,a_2,\ldots,a_{n+1}$ and
$B=b_1,b_2,\ldots,b_{n+1}$ are related by the equalities
$b_i=(-1)^{i-1}a_i$ for $1\le i\le n$ and $b_{n+1}=-a_{n+1}$.
Moreover it is required that the sum of
the non-periodic autocorrelation functions of the four sequences
be a delta function.

Examples of near-normal sequences are known for all even $n\le30$.
Due to the important role that these sequences play in various
combinatorial constructions such as that for $T$-sequences,
orthogonal designs, and Hadamard matrices \cite{HCD,SY,KSY},
it is of interest to classify the near-normal sequences of
small length. We shall give such classification for all even $n\le30$.
We have recently constructed \cite{DZ2} near-normal sequences
for $n=32$ and $n=34$.

In section \ref{baz} we recall from \cite{DZ1} the basic properties
of base sequences. In section \ref{sk-nor} we introduce two
equivalence relations for near-normal sequences: $BS$-equivalence
and $NN$-equivalence. The former is finer than the latter.
We also introduce the canonical form
for the $BS$-equivalence classes. By using this canonical form,
we are able to compute the representatives of the $BS$-equivalence
classes and then deduce the set of representatives for the
$NN$-equivalence classes. In section \ref{tab} we tabulate
our results giving the list of representatives of the
$NN$-equivalence classes. The representatives are written
in the encoded form used in our previous paper \cite{DZ1}.

\section{Base sequences} \label{baz}

We denote finite sequences of integers by capital letters. If, say,
$A$ is such a sequence of length $n$ then we denote its elements
by the corresponding lower case letters. Thus
$$ A=a_1,a_2,\ldots,a_n. $$
To this sequence we associate the polynomial
$$ A(x)=a_1+a_2x+\cdots+a_nx^{n-1} , $$
which we view as an element of the Laurent polynomial ring
$\bZ[x,x^{-1}]$. (As usual, $\bZ$ denotes the ring of integers.)
The non-periodic autocorrelation function $N_A$ of $A$ is defined by:
$$ N_A(i)=\sum_{j\in\bZ} a_ja_{i+j},\quad i\in\bZ, $$
where $a_k=0$ for $k<1$ and for $k>n$. Note that
$N_A(-i)=N_A(i)$ for all $i\in\bZ$ and $N_A(i)=0$ for $i\ge n$.
The norm of $A$ is the Laurent polynomial $N(A)=A(x)A(x^{-1})$.
We have
$$ N(A)=\sum_{i\in\bZ} N_A(i) x^i . $$
The negation, $-A$, of $A$ is the sequence
$$ -A=-a_1,-a_2,\ldots,-a_n. $$
The {\em reversed} sequence $A'$ and the {\em alternated} sequence
$A^*$ of the sequence $A$ are defined by
\begin{eqnarray*}
A' &=& a_n,a_{n-1},\ldots,a_1 \\
A^* &=& a_1,-a_2,a_3,-a_4,\ldots,(-1)^{n-1}a_n.
\end{eqnarray*}
Observe that $N(-A)=N(A')=N(A)$ and $N_{A^*}(i)=(-1)^i N_A(i)$
for all $i\in\bZ$.
By $A,B$ we denote the concatenation of the sequences $A$ and $B$.

A {\em binary sequence} is a sequence whose terms belong to the
set $\{\pm1\}$. When displaying such sequences, we shall often
write $+$ for $+1$ and $-$ for $-1$.
The {\em base sequences} consist of four 
binary sequences $(A;B;C;D)$, with $A$ and $B$ of length $m$ 
and $C$ and $D$ of length $n$, such that 
\begin{equation} \label{norma}
N(A)+N(B)+N(C)+N(D)=2(m+n).
\end{equation}
We denote by $BS(m,n)$ the set of such base sequences with
$m$ and $n$ fixed. 

We recall from \cite{DZ1} that two members of $BS(m,n)$ are said to be
{\em equivalent} if one can be transformed to the other by applying
a finite sequence of {\em elementary transformations}. The elementary
transformations of $(A;B;C;D)\in BS(m,n)$ are the following:

(i) Negate one of the four sequences $A;B;C;D$.

(ii) Reverse one of the sequences $A;B;C;D$.

(iii) Interchange two of the sequences $A;B;C;D$ of the same length.

(iv) Alternate all four sequences $A;B;C;D$.

One can view the equivalence classes in $BS(m,n)$ as orbits of an
abstract finite group $G$. We shall assume that $m\ne n$.
In that case $G$ has order $|G|=2^{11}$. To construct $G$, we start
with an elementary abelian group $E$ of order $2^8$ with generators
$\ve_i,\vf_i$, $i\in\{1,2,3,4\}$.
and an elementary abelian group $V$ of order $4$ with generators
$\sigma_1,\sigma_2$. Let $H$ be the semidirect product of $E$ and
$V$, with $V$ acting on $E$ so that $\sigma_1$ commutes with
$\ve_3,\ve_4,\vf_3,\vf_4$, and $\sigma_2$ commutes with
$\ve_1,\ve_2,\vf_1,\vf_2$, and
$$
\sigma_1\ve_1=\ve_2\sigma_1,\quad
\sigma_1\vf_1=\vf_2\sigma_1,\quad
\sigma_2\ve_3=\ve_4\sigma_2,\quad
\sigma_2\vf_3=\vf_4\sigma_2.
$$
Finally, we define $G$ as the semidirect product of $H$
and the group $Z_2$ of order 2 with generator $\psi$.
By definition, $\psi$ commutes with each $\ve_i$ and we have
$$ \psi\vf_i=\ve_i^{m-1}\vf_i\psi,\ i=1,2; \quad
\psi\vf_j=\ve_j^{n-1}\vf_j\psi,\ j=3,4. $$

The group $G$ acts on $BS(m,n)$ as follows.
If $S=(A;B;C;D)\in BS(m,n)$ then
\begin{eqnarray*}
&& \ve_1S=(-A;B;C;D),\quad \vf_1S=(A';B;C;D), \\
&& \ve_2S=(A;-B;C;D),\quad \vf_2S=(A;B';C;D), \\
&& \ve_3S=(A;B;-C;D),\quad \vf_3S=(A;B;C';D), \\
&& \ve_4S=(A;B;C;-D),\quad \vf_4S=(A;B;C;D'),
\end{eqnarray*}
and $\psi S=(A^*;B^*;C^*;D^*)$. It is easy to verify that the defining
relations of $G$ are satisfied by these transformations and so the
action of $G$ on $BS(m,n)$ is well defined. Consequently, the
following proposition holds.

\begin{proposition} \label{orbite}
If $m\ne n$, the orbits of $G$ in $BS(m,n)$ are the same as the
equivalence classes in $BS(m,n)$.
\end{proposition}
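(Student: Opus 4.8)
The plan is to identify the image of the action of $G$ on $BS(m,n)$ with the subgroup $H$ of the symmetric group on $BS(m,n)$ generated by the elementary transformations; the proposition then follows formally. Indeed, for any homomorphism $\rho$ from a group to a symmetric group the $\rho(G)$-orbits coincide with the $G$-orbits, and for any subgroup generated by a set $\mathcal{T}$ of permutations of a set $X$ the orbits are exactly the classes of the equivalence relation on $X$ generated by $x\sim tx$, $t\in\mathcal{T}$. In our situation the latter is precisely the equivalence relation defining the classes in $BS(m,n)$, so it suffices to check that $G$ and the elementary transformations generate the same subgroup.

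First I would write down the action of the remaining generators of $G$: the generators of $V$ act by $\sigma_1 S=(B;A;C;D)$ and $\sigma_2 S=(A;B;D;C)$, the interchanges of the two equal-length pairs. Then one verifies that the displayed transformations satisfy the defining relations of $G$. The relations inside $E$ and the action of $V$ on $E$ are immediate from $(-A)'=-(A')$, $A''=A$, the fact that an operation on $A$ or $B$ commutes with any operation on $C$ or $D$, and the fact that interchanging $A$ and $B$ conjugates the negation (reversal) of $A$ to that of $B$, and similarly for $C,D$. The only relations requiring a computation are $\psi\vf_i=\ve_i^{m-1}\vf_i\psi$ for $i=1,2$ and $\psi\vf_j=\ve_j^{n-1}\vf_j\psi$ for $j=3,4$; both reduce to the identity $(A^*)'=(-1)^{\ell-1}(A')^*$, valid for any sequence $A$ of length $\ell$, applied with $\ell=m$ in the first case and $\ell=n$ in the second. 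One must also check that each generator maps $BS(m,n)$ into itself, i.e.\ preserves \eqref{norma}: for negations, reversals and interchanges this is clear since $N(-A)=N(A')=N(A)$, and for $\psi$ it follows from $N(A^*)(x)=N(A)(-x)$, which leaves the constant $2(m+n)$ unchanged. Thus the action of $G$ is well defined and its image is generated by the permutations $\ve_i,\vf_i,\sigma_1,\sigma_2,\psi$.

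It remains to note that this list of permutations of $BS(m,n)$ is exactly the list of elementary transformations (i)--(iv): $\ve_i$ negates the $i$-th sequence, $\vf_i$ reverses it, $\psi$ alternates all four sequences simultaneously, and $\sigma_1,\sigma_2$ interchange sequences of equal length. Here the hypothesis $m\ne n$ is essential: since $A$ and $B$ have length $m$ while $C$ and $D$ have length $n$, the only unordered pairs of equal-length sequences among $A,B,C,D$ are $\{A,B\}$ and $\{C,D\}$, so (iii) contributes precisely $\sigma_1$ and $\sigma_2$ and nothing more. Consequently the image of $G$ equals $H$, and the proposition follows. The only genuine obstacle is the sign bookkeeping in the relations $\psi\vf_i=\ve_i^{m-1}\vf_i\psi$; everything else is routine verification.
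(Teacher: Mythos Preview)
Your proposal is correct and follows exactly the route the paper indicates: the paper does not give a separate proof of this proposition, but simply asserts it as a consequence of the fact that the displayed transformations satisfy the defining relations of $G$ (``It is easy to verify \ldots\ Consequently, the following proposition holds''). Your argument makes this explicit by identifying the generators $\ve_i,\vf_i,\sigma_1,\sigma_2,\psi$ with the elementary transformations (i)--(iv), verifying the relations (in particular the identity $(A^*)'=(-1)^{\ell-1}(A')^*$ behind $\psi\vf_i=\ve_i^{\ell-1}\vf_i\psi$), and noting that $m\ne n$ ensures the only equal-length interchanges are $\sigma_1$ and $\sigma_2$.
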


We need also the encoding scheme for the base sequences
$(A;B;C;D)$ in $BS(n+1,n)$ introduced in \cite{DZ1}.
We now recall that scheme.
We decompose the pair $(A;B)$ into quads
$$ \left[ \begin{array}{ll} a_i & a_{n+2-i} \\ 
b_i & b_{n+2-i} \end{array} \right],\quad i=1,2,\ldots,
\left[ \frac{n+1}{2} \right], $$ 
and, if $n=2m$ is even, the central column
$ \left[ \begin{array}{l} a_{m+1} \\ b_{m+1} \end{array} \right]. $
Up to equivalence of base sequences, we can assume that
the first quad of $(A;B)$ is 
$ \left[ \begin{array}{ll} + & + \\ + & - \end{array} \right]. $
We attach to this particular quad the label 0.
The other quads in $(A;B)$ and all the quads of the pair $(B;C)$,
shown with their labels, must be one of the following:
\begin{center}
\begin{eqnarray*}
1=\left[ \begin{array}{ll} + & + \\ + & + \end{array} \right],\quad 
2=\left[ \begin{array}{ll} + & + \\ - & - \end{array} \right],\quad 
3=\left[ \begin{array}{ll} - & + \\ - & + \end{array} \right],\quad 
4=\left[ \begin{array}{ll} + & - \\ - & + \end{array} \right], \\
5=\left[ \begin{array}{ll} - & + \\ + & - \end{array} \right],\quad 
6=\left[ \begin{array}{ll} + & - \\ + & - \end{array} \right],\quad 
7=\left[ \begin{array}{ll} - & - \\ + & + \end{array} \right],\quad 
8=\left[ \begin{array}{ll} - & - \\ - & - \end{array} \right].
\end{eqnarray*}
\end{center}
The central column is encoded as
$$
0=\left[ \begin{array}{l} + \\ + \end{array} \right],\quad
1=\left[ \begin{array}{l} + \\ - \end{array} \right],\quad
2=\left[ \begin{array}{l} - \\ + \end{array} \right],\quad
3=\left[ \begin{array}{l} - \\ - \end{array} \right].
$$

If $n=2m$ is even, the pair $(A;B)$ is encoded as the
sequence of digits $q_1q_2\ldots q_mq_{m+1}$, where $q_i$,
$1\le i\le m$, is the label of the $i$th quad and $q_{m+1}$
is the label of the central column. If $n=2m-1$ is odd,
then $(A;B)$ is encoded by $q_1q_2\ldots q_m$, where $q_i$
is the label of the  $i$th quad for each $i$.
We use the same recipe to encode the pair $(C;D)$.

As an example, the base sequences
\begin{eqnarray*}
A &=& +,+,+,+,-,-,+,-,+; \\
B &=& +,+,+,-,+,+,+,-,-; \\
C &=& +,+,-,-,+,-,-,+; \\
D &=& +,+,+,+,-,+,-,+; \\
\end{eqnarray*}
are encoded as $06142;\, 1675$.

\section{Near-normal sequences} \label{sk-nor}

{\em Near-normal sequences}, originally defined 
by C.H. Yang \cite{Y}, can be viewed as a special type of base 
sequences $(A;B;C;D)\in BS(n+1,n)$ (see \cite{KSY,DZ1}) with $n$ even,
namely such that $b_i=(-1)^{i-1} a_i$ for $1\le i\le n$.
Note that we also must have $b_{n+1}=-a_{n+1}$. Hence, the sequence
$B$ is uniquely determined by $A$, and we define $\al A=B$.
Note that also $\al B=A$.

We denote by $NN(n)$ the subset
of $BS(n+1,n)$ consisting of near-normal sequences.
It has been conjectured (Yang \cite{Y}) that
$NN(n)\ne\emptyset$ for all positive even $n$'s.
Yang's conjecture has been confirmed for all even $n\le34$ \cite{DZ2}.

We shall introduce two equivalence relations in $NN(n)$: 
$BS$-equivalence and $NN$-equivalence. The former is stronger than
the latter.

We say that two members of $NN(n)$ are {\em $BS$-equivalent} if they are
equivalent as base sequences in $BS(n+1,n)$.
One can enumerate the $BS$-equivalence classes by finding suitable
representatives of the classes.
For that purpose we introduce the concept of canonical form
for near-normal sequences.

For convenience we fix the following notation.
Let $(A;B=\al A;C;D)\in NN(n)$, $n=2m$, and let
$$
p_1p_2\ldots p_mp_{m+1} \quad \text{resp.} \quad q_1q_2\ldots q_m
$$
be the encoding of the pair $(A;B)$ resp. $(C;D)$.

\begin{definition}
We say that the near-normal sequences $(A;B;C;D)$
are in the {\em canonical form} if the following conditions hold:

(i) $p_1=0$, $q_1=1$.

(ii) If $q_j=2$ for some $j$, then $q_i=7$ for some index $i$
with $1<i<j$.

(iii)  If $q_j\in\{3,4,5\}$ for some $j$, then $q_i=6$ for some index
$i$ with $1<i<j$.

(iv) If $q_k\ne7$ for all $k$'s and $q_j=4$ for some $j$,
then $q_i=5$ for some index $i$ with $1<i<j$.
\end{definition}

The following proposition shows how one can enumerate the $BS$-equivalence
classes of $NN(n)$.

\begin{proposition} \label{klase}
For each $BS$-equivalence class $\pE\subseteq NN(n)$, $n=2m$, there is a unique
$(A;B;C;D)\in\pE$ having the canonical form.
\end{proposition}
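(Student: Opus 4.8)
The plan is to prove existence and uniqueness separately, exploiting the fact that a near-normal sequence $(A;B=\al A;C;D)$ is essentially encoded by the single pair $(C;D)$ once $A$ is pinned down. First I would analyze which elementary transformations of $BS(n+1,n)$ preserve $NN(n)$, and how they act on the encoding $p_1\ldots p_{m+1};\, q_1\ldots q_m$. Because $B=\al A$ is forced, negating or reversing $A$ or $B$, and alternating all four sequences, must be coupled so that the pair $(A;B)$ stays of the special near-normal type; the surviving moves act on the pair $(C;D)$ by negating $C$ or $D$, reversing $C$ or $D$ (simultaneously, to respect the quad structure), swapping $C\leftrightarrow D$, and the alternation $\psi$. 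I would translate each of these into an explicit action on the digit string $q_1q_2\ldots q_m$ using the quad table: negations and the swap act as fixed permutations of the label set $\{1,\ldots,8\}$ applied entrywise, reversal reverses the string and simultaneously relabels, and $\psi$ reverses the string together with a label permutation depending on parity. Condition (i), $p_1=0$ and $q_1=1$, is exactly the normalization already recorded in Section~\ref{baz} (the first quad of $(A;B)$ may be taken to be label $0$, and one checks that for near-normal sequences the first quad of $(C;D)$ can simultaneously be forced to $1$), so (i) costs nothing and fixes the "base point."

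For \emph{existence}, I would run a greedy reduction on the string $q_1q_2\ldots q_m$. The group generated by the entrywise label permutations (coming from negations of $C$, $D$ and the swap $C\leftrightarrow D$) acts on each digit; I would identify the orbits of $\{2,\ldots,8\}$ under this stabilizer-of-$q_1$ subgroup and pick out $\{7\}$, $\{6\}$, $\{3,4,5\}$ as the relevant blocks referenced in (ii)--(iv). The point of conditions (ii)--(iv) is that a label of "high" type ($2$, or $3,4,5$, or a lone $4$) can always be pushed rightward past, or converted using, a nearer occurrence of its "trigger" label ($7$, or $6$, or $5$): if the trigger does not already occur to the left, then applying the reversal move (which flips the whole string) together with the appropriate relabeling creates it, or else the offending label itself can be relabeled into an allowed one. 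Iterating — always decreasing a suitable lexicographic/counting statistic on $(q_2,\ldots,q_m)$ — terminates in a string satisfying (i)--(iv), proving that every $BS$-equivalence class meets the canonical set.

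For \emph{uniqueness}, I would show the canonical form is rigid: suppose two canonical strings $q_1\ldots q_m$ and $q_1'\ldots q_m'$ are related by some $g\in G$ preserving $NN(n)$. Decompose $g$ into its reversing part (present or not) and its entrywise-relabeling part. If $g$ involves no reversal, then $g$ acts entrywise and, combined with $q_1=q_1'=1$, one checks from the label table that $g$ must fix every allowed label, hence $q=q'$. If $g$ involves a reversal (ordinary reversal or $\psi$), I would argue that the reversed-and-relabeled string violates at least one of (ii)--(iv) unless the original already had a very constrained shape, and then a direct case check on that shape forces $q=q'$ again; this is where conditions (iv) (the tie-breaker when no $7$ occurs) and the clause "$1<i<j$" in (ii)--(iv) do their work, ruling out the symmetric alternative. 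The main obstacle I anticipate is precisely this uniqueness argument: one must verify that the four numbered conditions are simultaneously \emph{sufficient} to kill every element of the (order-$2^{11}$) group $G$ that could move one canonical string to another, which requires a careful but finite bookkeeping of how reversal interacts with the label permutations and with the forced relation $B=\al A$. Once the admissible transformations on $q_1\ldots q_m$ are tabulated explicitly, both halves reduce to a bounded case analysis, and Proposition~\ref{orbite} guarantees that orbits of this group action coincide with $BS$-equivalence classes.
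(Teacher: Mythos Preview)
Your overall plan (existence by reduction, uniqueness by analyzing the group element) matches the paper's, but there is a concrete error in how you model the action on the quad string $q_1q_2\ldots q_m$. Because the $i$th quad of $(C;D)$ is built from positions $i$ and $n+1-i$, reversing $C$ (i.e., applying $\vf_3$) does \emph{not} reverse the string $q_1\ldots q_m$; it swaps the two top entries inside each quad, which is the entrywise label permutation $(3\,4)(5\,6)$. Likewise $\vf_4$ acts entrywise as $(3\,5)(4\,6)$, the swap $\sigma_2$ as $(2\,7)(4\,5)$, and alternation $\psi$ acts positionwise (differently on even and odd $i$) but again does not reverse the string. So your description ``reversal reverses the string and simultaneously relabels'' and ``$\psi$ reverses the string together with a label permutation'' is wrong, and the existence step you sketch (``applying the reversal move, which flips the whole string, creates the trigger to the left'') would not do what you need.

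With the correct picture the paper's argument is much shorter than what you outline. For existence one simply applies, in order, the entrywise permutations: $\sigma_2$ to replace a premature $2$ by $7$ (condition~(ii)); $\vf_3$, $\vf_4$, or $\vf_3\vf_4$ to convert the first label in $\{3,4,5\}$ to $6$ while fixing the preceding labels in $\{1,2,7,8\}$ (condition~(iii)); and $\sigma_2$ again for~(iv). For uniqueness, $q_1^{(1)}=q_1^{(2)}=1$ already forces $g\in H$ (so your ``reversal present'' case never occurs in the relevant sense), and then $p_1=0$ together with the near-normal constraint $B=\al A$ pins down the $(A;B)$ part; on the $(C;D)$ side the stabilizer of $q_1=1$ in $H_2$ is exactly $\langle\vf_3,\vf_4,\sigma_2\rangle$, and conditions (ii)--(iv) are designed to kill these three generators one by one. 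You should redo the bookkeeping with reversal and alternation acting entrywise rather than by string reversal; once that is fixed, the proof collapses to a short direct check rather than the ``bounded case analysis'' you anticipate.
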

\begin{proof}
Let $(A;B;C;D)\in\pE$ be arbitrary and let
$p_1p_2\ldots p_mp_{m+1}$ resp.  $q_1q_2\ldots q_m$
be the encoding of the pair $(A;B)$ resp. $(C;D)$.
By applying the first three
types of elementary transformations we can assume that $p_1=0$
and $c_1=d_1=+1$. Then $q_1$ must be either 1 or 6. In the latter case
we apply the elementary transformation (iv). Thus we may assume that
$p_1=0$ and $q_1=1$, i.e., the condition (i) for the canonical form
is satisfied.

Now assume that $q_j=2$ for some $j$ and that $q_i\ne7$
for all $i<j$. After interchanging the sequences $C$ and $D$, we obtain that
$q_j=7$ and $q_i\ne2$ for $i<j$. Hence we may also assume that the
condition (ii) is satisfied.

Next assume that $q_j\in\{3,4,5\}$ for some $j$.
We may take $j$ to be minimal with this property.
Assume that $q_i\ne6$ for $i<j$.
Consequently, $q_i\in\{1,2,7,8\}$ for all $i<j$. 
If $q_j=3$ we replace $(C;D)$ with
$(C';D')$. If $q_j=4$ we replace $D$ with $D'$. If $q_j=5$ we replace
$C$ with $C'$. After this change, we obtain that in all three cases
$q_j=6$ while the $q_i$'s for $i<j$ remain unchanged. Hence the
condition (iii) is also satisfied.

Finally, assume that $q_k\ne7$ for all $k$'s, $q_j=4$ for some $j$,
and $q_i\ne5$ for $i<j$. Since the condition (ii) holds, we have
$q_i\in\{1,3,6,8\}$ for all $i<j$. After interchanging $C$ and $D$,
we obtain that $q_j=5$ while the $q_i$ with $i<j$ remain unchanged.
Hence now $(A;B;C;D)$ is in the canonical form.

It remains to prove the uniqueness assertion. Let
$$
S^{(k)}=(A^{(k)};B^{(k)};C^{(k)};D^{(k)})\in\pE,\quad (k=1,2)
$$
be in the canonical form. By Proposition \ref{orbite}, there exists
$g\in G$ such that $gS^{(1)}=S^{(2)}$.
Let $p_1^{(1)}p_2^{(1)}\ldots p_{m+1}^{(1)}$ resp.
$p_1^{(2)}p_2^{(2)}\ldots p_{m+1}^{(2)}$ be the encoding of the pair
$(A^{(1)};B^{(1)})$ resp. $(A^{(2)};B^{(2)})$.
Let $q_1^{(1)}q_2^{(1)}\ldots q_m^{(1)}$ resp.
$q_1^{(2)}q_2^{(2)}\ldots q_m^{(2)}$ be the encoding of the pair
$(C^{(1)};D^{(1)})$ resp. $(C^{(2)};D^{(2)})$.
Since $q_1^{(1)}=q_1^{(2)}=1$, $g$ must be in $H$.
Note that $H=H_1\times H_2$, where the subgroup $H_1$ resp. $H_2$
is generated by $\{\ve_1,\ve_2,\vf_1,\vf_2,\sigma_1\}$ resp.
$\{\ve_3,\ve_4,\vf_3,\vf_4,\sigma_2\}$.
Thus we have $g=h_1h_2$ with $h_1\in H_1$ and $h_2\in H_2$.
Consequently, $h_1\cdot(A^{(1)};B^{(1)})=(A^{(2)};B^{(2)})$ and
$h_2\cdot(C^{(1)};D^{(1)})=(C^{(2)};D^{(2)})$.
We also have the direct decomposition $E=E_1\times E_2$, where
$E_1=E\cap H_1$ and $E_2=E\cap H_2$.

Since $p_1^{(1)}=p_1^{(2)}=0$, the equality
$h_1\cdot(A^{(1)};B^{(1)})=(A^{(2)};B^{(2)})$ implies that $h_1\in E_1$. 
Thus $h_1=\ve_1^{e_1} \ve_2^{e_2} \vf_1^{f_1} \vf_2^{f_2}$ with
$e_1,e_2,f_1,f_2\in\{0,1\}$.
Since the first and the last terms of the sequences $A^{(1)}$
and $A^{(2)}$ are $+1$, we have $e_1=0$. It follows that the middle
terms of these two sequences are the same. As $S^{(1)}$ and $S^{(2)}$
are near-normal sequences, the sequences $B^{(1)}$ and $B^{(2)}$
must also have the same middle terms. Consequently, $e_2=0$.
Since the sequences $B^{(1)}$ and $B^{(2)}$ have the same first term,
$+1$, and the same last term, $-1$, we infer that $f_2=0$.
Consequently, $B^{(1)}=B^{(2)}$. As $A^{(1)}=\al B^{(1)}$ and
$A^{(2)}=\al B^{(2)}$, we infer that also $A^{(1)}=A^{(2)}$.

Since $q_1^{(1)}=q_1^{(2)}=1$, the equality
$h_2\cdot(C^{(1)};D^{(1)})=(C^{(2)};D^{(2)})$ implies that $h_2$ 
belongs to the subgroup of $H_2$ generated by $\{\vf_3,\vf_4,\sigma_2\}$.
Thus $h_2=\vf_3^{f_3} \vf_4^{f_4} \sigma_2^s$ with $f_3,f_4,s\in\{0,1\}$.

Assume that $q_j^{(1)}=7$ for some $j$. Choose the smallest such $j$.
Then $q_i^{(1)}\ne2$ for $1<i<j$.
The quads 1,3,6,8 are fixed by $\sigma_2$ and the quads 2,4,5,7 are permuted
via the involution $(2,7)(4,5)$. On the other hand,
the generators $\vf_3$ and $\vf_4$ fix the quads 1,2,7,8.
Since $S^{(1)}$ and $S^{(2)}$ are in  the canonical form it follows that
$s=0$ and so $q_j^{(2)}=7$ and $q_i^{(2)}\ne2$ for $1<i<j$. The equality
$\vf_3^{f_3} \vf_4^{f_4}\cdot(C^{(1)};D^{(1)})=(C^{(2)};D^{(2)})$ now
implies that $C^{(1)}=C^{(2)}$ and $D^{(1)}=D^{(2)}$.
Hence $S^{(1)}=S^{(2)}$.

The argument is similar if $q_j^{(1)}\ne7$ for all $j$, which implies
that also $q_j^{(1)}\ne2$ for all $j$.
\end{proof}

We proceed to define the $NN$-equivalence relation in $NN(n)$.
For this we need to introduce the {\em $NN$-elementary
transformations:}

(i) Negate both sequences $A;B$ or one of $C;D$.

(ii) Reverse one of the sequences $C;D$.

(iii) Interchange the sequences $A;B$ or $C;D$.

(iv) Replace the sequences $(A;B=\al A)$ with
$(\hat{A};\hat{B}=\al\hat{A})$ where
$$ \hat{A}=a_{n-1},a_2,a_{n-3},a_4,\ldots,a_1,a_n,a_{n+1}. $$

(v) Replace the sequences $(C;D)$ with the pair
$(\tilde{C};\tilde{D})$ which is defined by its encoding
$\tilde{q}_1\tilde{q}_2\ldots \tilde{q}_m$ with
$$
\tilde{q}_i=\begin{cases} 5, & \text{if $q_i=4$;} \\ 4,
& \text{if $q_i=5$;} \\ q_i & \text{otherwise.} \end{cases}
$$

(vi) Alternate all four sequences $A;B;C;D$.

\begin{lemma} \label{L1}
By using the above notation, we have
$N(\hat{A})+N(\hat{B})=N(A)+N(B)$
and $N(\tilde{C})+N(\tilde{D})=N(C)+N(D)$.
Consequently, the quadruples $(\hat{A};\hat{B};C;D)$
and $(A;B;\tilde{C};\tilde{D})$ belong to $NN(n)$.
\end{lemma}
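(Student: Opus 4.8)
The plan is to prove the two norm identities separately and then invoke the definition of $NN(n)$ (i.e. the base-sequence norm equation \eqref{norma} with $m=n+1$) to conclude membership. For the first identity $N(\hat{A})+N(\hat{B})=N(A)+N(B)$, I would work with the non-periodic autocorrelation functions rather than the norms directly, since $N(X)=\sum_i N_X(i)x^i$ and it suffices to show $N_{\hat A}(k)+N_{\hat B}(k)=N_A(k)+N_B(k)$ for every $k$. The key structural fact is that $B=\al A$, so $b_i=(-1)^{i-1}a_i$ for $1\le i\le n$ and $b_{n+1}=-a_{n+1}$; equivalently $b_i=(-1)^{i-1}a_i$ for all $i\le n$ plus the sign flip in the last slot. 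I would first record how the permutation defining $\hat A$ acts: it fixes the even-indexed positions $2,4,\ldots$ and the last two positions $n,n+1$, and it reverses the odd-indexed positions among $\{1,3,\ldots,n-1\}$. So $\hat A$ is obtained from $A$ by an "interleaved reversal'' of the odd sub-sequence.

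The cleanest route is to split each sequence into its odd-index and even-index parts. Write $A_{\mathrm{o}}=a_1,a_3,\ldots,a_{n-1}$ and $A_{\mathrm{e}}=a_2,a_4,\ldots,a_n,a_{n+1}$ (grouping the final term $a_{n+1}$ with the even part, which is natural since $n$ is even). Then passing from $A$ to $\hat A$ reverses $A_{\mathrm{o}}$ and leaves $A_{\mathrm{e}}$ untouched. I would express $N_A(k)$ as a sum of four pieces according to the parity of the two indices involved (odd–odd, odd–even, even–odd, even–even). The odd–odd contribution to $N(A)$ is governed by $N(A_{\mathrm{o}})$ and is invariant under reversal since $N(X')=N(X)$; the even–even contribution is literally unchanged. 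The cross terms (odd–even and even–odd) are the only ones affected, and here is where $B=\al A$ enters: because $b_i=(-1)^{i-1}a_i$, the odd-position entries of $B$ coincide with those of $A$ while the even-position entries are negated (with the final-slot sign flip). Consequently the cross terms in $N(A)$ and the cross terms in $N(B)$ are \emph{negatives} of each other up to the last-position correction, so they cancel in the sum $N(A)+N(B)$ — and likewise for $N(\hat A)+N(\hat B)$. Hence $N(A)+N(B)$ depends only on the reversal-invariant and unchanged pieces, giving the identity. The main obstacle is bookkeeping the boundary term $a_{n+1}$ and the exact sign $b_{n+1}=-a_{n+1}$, which slightly breaks the clean parity pattern in the very last slot; I would handle this by checking directly that the extra terms involving index $n+1$ contribute the same to the $A$-part and to the $\hat A$-part (since position $n+1$ is fixed by the permutation and position $n$ is too, the only autocorrelation lags that could see a difference pair an odd position with $n$ or $n+1$, and those odd positions are merely permuted among themselves by an index-reversing map that preserves the relevant sums).

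For the second identity $N(\tilde C)+N(\tilde D)=N(C)+N(D)$, I would argue purely at the level of quads. The transformation (v) swaps quad types $4$ and $5$ columnwise and fixes all other quad types. Recalling the quad tables, type $4=\left[\begin{smallmatrix}+&-\\-&+\end{smallmatrix}\right]$ and type $5=\left[\begin{smallmatrix}-&+\\+&-\end{smallmatrix}\right]$ are related by negating \emph{both} rows, i.e. within a quad, swapping $4\leftrightarrow5$ replaces $(c_i,c_{n+1-i})$ with $(-c_i,-c_{n+1-i})$ and likewise for $D$ — but only in those positions lying in a type-$4$ or type-$5$ quad. I would observe that this is exactly the operation "negate $C$ and $D$ in a fixed symmetric set $P$ of positions (the union of the $4/5$-quads), where $P$ is invariant under $i\mapsto n+1-i$.'' Then I would show that for any symmetric position set $P$, negating a $\pm1$ sequence $X$ on $P$ changes $N_X(k)$ only through terms pairing a position in $P$ with one outside $P$, and — crucially — the \emph{same} set $P$ applies to both $C$ and $D$ because a quad type is a joint property of the $i$th columns of $C$ and $D$. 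Since within each quad the $C$-entries and $D$-entries of types $4$ and $5$ have opposite signs in the relevant pattern, the cross terms that change in $N(C)$ are cancelled by the corresponding changes in $N(D)$, yielding $N(\tilde C)+N(\tilde D)=N(C)+N(D)$. Finally, combining both identities with \eqref{norma} for the original $(A;B;C;D)\in NN(n)$ gives $N(\hat A)+N(\hat B)+N(C)+N(D)=2(2n+1)$ and $N(A)+N(B)+N(\tilde C)+N(\tilde D)=2(2n+1)$; since $\hat A=\al^{-1}$-related correctly (one checks $\hat B=\al\hat A$ by construction, as (v) is defined to preserve the near-normal relation and (iv) is stated the same way) and $\tilde C,\tilde D$ are still $\pm1$-sequences of length $n$, both quadruples lie in $NN(n)$. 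The only genuinely delicate point throughout is verifying that the "cross terms cancel'' claims hold lag-by-lag including at the boundary; everything else is routine parity and sign tracking.
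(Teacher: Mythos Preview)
For the first identity your approach coincides with the paper's: both observe that since $b_i=a_i$ for odd $i<n$ while $b_i=-a_i$ for even $i$ and for $i=n+1$, every mixed-parity product $a_ia_j+b_ib_j$ vanishes, so $N(A)+N(B)$ depends only on the odd--odd block (invariant under reversing the odd subsequence) and the ``even plus last slot'' block (fixed by the hat). The paper compresses this into the single line $\hat a_i\hat a_j+\hat b_i\hat b_j=a_ia_j+b_ib_j$ for $i$ even and odd $j<n$, both sides being zero.

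Your argument for $N(\tilde C)+N(\tilde D)=N(C)+N(D)$ has a genuine gap. You correctly identify that swapping quads $4\leftrightarrow5$ negates both $C$ and $D$ on a reflection-symmetric position set $P$, and that only products pairing some $i\in P$ with some $j\notin P$ can change. Your cancellation claim rests on $d_i=-c_i$ for $i\in P$, which yields $c_ic_j+d_id_j=c_i(c_j-d_j)$; but this is zero only when $c_j=d_j$, and that fails whenever $j$ lies in a type-$2$ or type-$7$ quad, where $c_j=-d_j$. The repair uses one more feature of the quads $4,5$ that you did not invoke: they are antisymmetric, $c_{n+1-i}=-c_i$, whereas quads $2,7$ are symmetric, $c_{n+1-j}=c_j$. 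Applying the reflection $(i,j)\mapsto(n+1-i,\,n+1-j)$ to the surviving ``$P$ versus type-$2,7$'' terms shows that their contribution at lag $k$ equals the negative of their contribution at lag $-k$; since $N(\tilde C)+N(\tilde D)-N(C)-N(D)$ is automatically symmetric under $x\mapsto x^{-1}$, it must vanish. (The paper itself sketches only the first identity and says nothing about the second.)
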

\begin{proof}
We sketch the proof only for the first quadruple.
It suffices to show that for even $i$ and odd $j<n$ we have
$$ \hat{a}_i\hat{a}_j+\hat{b}_i\hat{b}_j=a_i a_j+b_i b_j. $$
This is indeed true since
$\hat{b}_j=\hat{a}_j$ and $\hat{a}_i+\hat{b}_i=0$.
\end{proof}

We say that two members of $NN(n)$ are {\em $NN$-equivalent} if one
can be transformed to the other by applying a finite sequence
of the $NN$-elementary transformations (i-vi).

Our main objective is to enumerate the $NN$-equivalence classes of
$NN(n)$ for even integers $n\le30$.

\section{Equivalence classes of near-normal sequences} \label{tab}

In Table 1 we list the codes for the representatives of the
$NN$-equivalence classes of $NN(n)$ for even $n\le30$.
All representatives are chosen in the canonical form.

\begin{center}
\begin{tabular}{|r|l|l|r|r|}
\multicolumn{5}{c}{Table 1: $NN$-equivalence classes of $NN(n)$} \\ \hline 
\multicolumn{1}{|c|}{} & \multicolumn{1}{c|}{$A$ \& $B$} & 
\multicolumn{1}{c|}{$C$ \& $D$} & \multicolumn{1}{c|}{$a,b,c,d$} &
\multicolumn{1}{c|}{ $a^*,b^*,c^*,d^*$ }\\ \hline
\multicolumn{5}{c}{ $n=2$ }\\ \hline
1 & 02 & 1 & $1,1,2,2$ & $3,-1,0,0$ \\ \hline
\multicolumn{5}{c}{ $n=4$ }\\ \hline
1 & 050 & 16 & $3,1,2,2$ & $3,1,-2,-2$ \\
2 & 073 & 17 & $-1,1,0,4$ & $3,-3,0,0$ \\ \hline
\multicolumn{5}{c}{ $n=6$ }\\ \hline
1 & 0711 & 188 & $3,3,-2,-2$ & $5,1,0,0$ \\
2 & 0512 & 172 & $3,3,2,2$ & $5,1,0,0$ \\ \hline
\multicolumn{5}{c}{ $n=8$ }\\ \hline
1 & 07643 & 1651 & $-1,1,4,4$ & $3,-3,-4,0$ \\
2 & 05850 & 1163 & $1,-1,4,4$ & $1,-1,4,4$ \\
3 & 05323 & 1637 & $3,-3,0,4$ & $-1,1,-4,-4$ \\ \hline
\multicolumn{5}{c}{ $n=10$ }\\ \hline
1 & 076462 & 16712 & $-1,3,4,4$ & $5,-3,-2,-2$ \\
2 & 078211 & 16561 & $3,-1,4,4$ & $1,1,-6,-2$ \\
3 & 078412 & 16787 & $-1,3,-4,4$ & $5,-3,-2,-2$ \\
4 & 076761 & 17621 & $-1,3,4,4$ & $5,-3,2,2$ \\
5 & 056732 & 11726 & $-1,3,4,4$ & $5,-3,2,2$ \\
6 & 058511 & 11635 & $3,-1,4,4$ & $1,1,2,6$ \\
7 & 053281 & 16355 & $3,-5,2,2$ & $-3,1,-4,-4$ \\
8 & 053781 & 17616 & $-1,-1,2,6$ & $1,-3,4,4$ \\ \hline 
\multicolumn{5}{c}{ $n=12$ }\\ \hline
1 & 0765373 & 161762 & $-3,3,4,4$ & $5,-5,0,0$ \\ 
2 & 0764373 & 165513 & $-3,3,4,4 $ & $5,-5,0,0$ \\
3 & 0764320 & 165776 & $3,1,-2,6 $ & $3,1,-6,-2$ \\
4 & 0764870 & 167162 & $-3,3,4,4 $ & $5,-5,0,0$ \\
5 & 0784370 & 167867 & $-3,3,-4,4 $ & $5,-5,0,0$ \\
6 & 0765373 & 176738 & $-3,3,-4,4 $ & $5,-5,0,0$ \\
7 & 0715873 & 187766 & $-3,3,-4,4 $ & $5,-5,0,0$ \\
8 & 0737653 & 187222 & $-3,3,4,-4 $ & $5,-5,0,0$ \\
9 & 0585140 & 116754 & $3,1,2,6 $ & $3,1,-2,6$ \\
10 & 0517820 & 161675 & $3,1,2,6 $ & $3,1,-2,-6$ \\
11 & 0512673 & 165714 & $3,1,2,6 $ & $3,1,-6,2$ \\
12 & 0512870 & 167575 & $3,1,-2,6 $ & $3,1,2,-6$ \\
13 & 0515643 & 176153 & $3,1,2,6 $ & $3,1,2,6$ \\
14 & 0515343 & 176547 & $3,1,-2,6 $ & $3,1,6,-2$ \\
\hline 
\end{tabular} \\
\end{center}

\begin{center}
\begin{tabular}{|r|l|l|r|r|}
\multicolumn{5}{c}{Table 1 (continued)} \\ \hline 
\multicolumn{1}{|c|}{} & \multicolumn{1}{c|}{$A$ \& $B$} & 
\multicolumn{1}{c|}{$C$ \& $D$} & \multicolumn{1}{c|}{$a,b,c,d$} &
\multicolumn{1}{c|}{ $a^*,b^*,c^*,d^*$ }\\ \hline
\multicolumn{5}{c}{ $n=14$ }\\ \hline
1 & 07623211 & 1637668 & $7,-1,-2,2$ & $1,5,-4,-4$ \\ 
2 & 07621231 & 1651468 & $7,-1,2,2 $ & $1,5,-4,-4$ \\
3 & 07643511 & 1675657 & $3,3,-2,6 $ & $5,1,4,-4$ \\
4 & 07676212 & 1763321 & $1,5,4,4 $ & $7,-1,2,2$ \\
5 & 07176262 & 1868866 & $1,5,-4,-4 $ & $7,-1,2,2$ \\
6 & 07378282 & 1865311 & $-5,-1,4,4 $ & $1,-7,2,-2$ \\
7 & 05673512 & 1172663 & $1,5,4,4 $ & $7,-1,-2,-2$ \\
8 & 05821712 & 1187763 & $3,3,-2,6 $ & $5,1,-4,-4$ \\
9 & 05128712 & 1638177 & $3,3,-2,6 $ & $5,1,-4,-4$ \\
10 & 05121562 & 1678524 & $7,3,0,0 $ & $5,5,-2,-2$ \\
11 & 05146762 & 1678376 & $1,5,-4,4 $ & $7,-1,-2,-2$ \\ \hline 
\multicolumn{5}{c}{ $n=16$ }\\ \hline
1 & 076567350 & 16117872 & $-1,5,2,6$ & $7,-3,-2,-2$ \\ 
2 & 076215320 & 16333817 & $7,1,0,4$ & $3,5,-4,-4$ \\ 
3 & 076212650 & 16373355 & $7,1,0,4$ & $3,5,-4,-4$ \\ 
4 & 076214670 & 16377568 & $3,5,-4,4$ & $7,1,0,-4$ \\ 
5 & 076487150 & 16716223 & $-1,5,6,2$ & $7,-3,2,2$ \\ 
6 & 076417643 & 16752321 & $-1,5,6,2$ & $7,-3,2,-2$ \\ 
7 & 076417343 & 16756467 & $-1,5,-2,6$ & $7,-3,2,2$ \\ 
8 & 076715643 & 17265377 & $-1,5,-2,6$ & $7,-3,-2,2$ \\ 
9 & 076517353 & 17661518 & $-1,5,2,6$ & $7,-3,2,-2$ \\ 
10 & 076534120 & 17665214 & $5,3,4,4$ & $5,3,-4,4$ \\ 
11 & 076587150 & 17766813 & $-1,5,-2,6$ & $7,-3,2,2$ \\ 
12 & 076487150 & 17816788 & $-1,5,-6,2$ & $7,-3,2,2$ \\ 
13 & 071564320 & 18676557 & $5,3,-4,4$ & $5,3,4,4$ \\ 
14 & 071265620 & 18863557 & $7,1,-4,0$ & $3,5,-4,-4$ \\ 
15 & 051284823 & 16546732 & $3,-7,2,2$ & $-5,1,-6,2$ \\ 
16 & 051235623 & 16637385 & $7,-3,-2,2$ & $-1,5,6,2$ \\ 
17 & 051462323 & 16654553 & $7,-3,2,2$ & $-1,5,6,-2$ \\ 
18 & 051267640 & 16753874 & $5,3,-4,4$ & $5,3,-4,-4$ \\ 
19 & 053467670 & 16537515 & $-1,5,2,6$ & $7,-3,2,-2$ \\ 
20 & 053467873 & 16754414 & $-5,1,2,6$ & $3,-7,-2,-2$ \\ 
21 & 053462823 & 16758534 & $3,-7,-2,2$ & $-5,1,-2,-6$ \\ 
22 & 051712820 & 17268876 & $7,1,-4,0$ & $3,5,-4,-4$ \\ 
23 & 051564173 & 17726215 & $3,5,4,4$ & $7,1,4,0$ \\ 
24 & 051467373 & 17886836 & $-1,5,-6,-2$ & $7,-3,-2,-2$ \\ 
\hline
\end{tabular} \\
\end{center}

\begin{center}
\begin{tabular}{|r|l|l|r|r|}
\multicolumn{5}{c}{Table 1 (continued)} \\ \hline 
\multicolumn{1}{|c|}{} & \multicolumn{1}{c|}{$A$ \& $B$} & 
\multicolumn{1}{c|}{$C$ \& $D$} & \multicolumn{1}{c|}{$a,b,c,d$} &
\multicolumn{1}{c|}{ $a^*,b^*,c^*,d^*$ }\\ \hline
\multicolumn{5}{c}{ $n=18$ }\\ \hline
1 & 0767653462 & 161544647 & $-3,5,2,6$ & $7,-5,0,0$ \\
2 & 0762328231 & 163544668 & $5,-7,0,0$ & $-5,3,-2,-6$ \\
3 & 0762328211 & 163554338 & $7,-5,0,0$ & $-3,5,-6,-2$ \\
4 & 0762328211 & 165138835 & $7,-5,0,0$ & $-3,5,-6,2$ \\
5 & 0782156561 & 165413577 & $3,-1,0,8$ & $1,1,-6,6$ \\
6 & 0782143782 & 165726177 & $-3,1,0,8$ & $3,-5,-6,-2$ \\
7 & 0767178262 & 172782221 & $-3,5,6,-2$ & $7,-5,0,0$ \\
8 & 0767643462 & 176672155 & $-3,5,2,6$ & $7,-5,0,0$ \\
9 & 0765153782 & 177821181 & $-3,5,2,6$ & $7,-5,0,0$ \\
10 & 0785153762 & 172212188 & $-3,5,6,-2$ & $7,-5,0,0$ \\
11 & 0737846761 & 186557167 & $-5,3,-2,6$ & $5,-7,0,0$ \\
12 & 0567156482 & 117763815 & $-1,3,0,8$ & $5,-3,2,6$ \\
13 & 0512876462 & 165351136 & $1,1,6,6$ & $3,-1,0,8$ \\
14 & 0514846732 & 166751736 & $-1,3,0,8$ & $5,-3,2,6$ \\
15 & 0512846282 & 167654744 & $3,-5,-2,6$ & $-3,1,-8,0$ \\
16 & 0512846262 & 167655745 & $5,-3,-2,6$ & $-1,3,-8,0$ \\
17 & 0512848232 & 167655745 & $3,-5,-2,6$ & $-3,1,-8,0$ \\
18 & 0532376482 & 165351136 & $-1,-1,6,6$ & $1,-3,0,8$ \\
19 & 0517846761 & 176567164 & $-1,3,0,8$ & $5,-3,6,-2$ \\
20 & 0537346781 & 176567164 & $-3,1,0,8$ & $3,-5,6,-2$ \\ \hline
\multicolumn{5}{c}{ $n=20$ }\\ \hline
1 & $07621517870$ & $1633771868$ & $1,7,-4,4$ & $9,-1,0,0$\\
2 & $07643282143$ & $1657513537$ & $3,-3,0,8$ & $-1,1,-8,-4$\\
3 & $07643215823$ & $1657761672$ & $3,-3,0,8$ & $-1,1,-8,-4$\\
4 & $07643285123$ & $1676715655$ & $3,-3,0,8$ & $-1,1,-8,-4$\\
5 & $07821417670$ & $1655337213$ & $1,7,4,4$ & $9,-1,0,0$\\
6 & $07821464623$ & $1657367551$ & $3,-3,0,8$ & $-1,1,-8,-4$\\
7 & $07156514620$ & $1876332551$ & $7,5,2,2$ & $7,5,-2,-2$\\
8 & $07356484873$ & $1871628611$ & $-7,-1,4,4$ & $1,-9,0,0$\\
9 & $05673282320$ & $1166536724$ & $5,-5,4,4$ & $-3,3,0,8$\\
10 & $05153467820$ & $1616571625$ & $3,1,6,6$ & $3,1,-6,-6$\\
11 & $05178262840$ & $1616372252$ & $3,-3,8,0$ & $-1,1,-8,-4$\\
12 & $05146784840$ & $1663611547$ & $-1,1,4,8$ & $3,-3,8,0$\\
13 & $05146214173$ & $1665572814$ & $7,5,2,2$ & $7,5,-2,2$\\
14 & $05146515153$ & $1678325325$ & $7,5,2,-2$ & $7,5,-2,-2$\\
15 & $05146265620$ & $1678813524$ & $9,-1,0,0$ & $1,7,-4,-4$\\
16 & $05346265873$ & $1661754125$ & $-1,-3,6,6$ & $-1,-3,6,-6$\\
17 & $05171564620$ & $1726655445$ & $7,5,2,2$ & $7,5,2,-2$\\
18 & $05126532340$ & $1786556323$ & $9,-1,0,0$ & $1,7,4,4$\\
\hline
\end{tabular} \\
\end{center}

\begin{center}
\begin{tabular}{|r|l|l|r|r|}
\multicolumn{5}{c}{Table 1 (continued)} \\ \hline 
\multicolumn{1}{|c|}{} & \multicolumn{1}{c|}{$A$ \& $B$} & 
\multicolumn{1}{c|}{$C$ \& $D$} & \multicolumn{1}{c|}{$a,b,c,d$} &
\multicolumn{1}{c|}{ $a^*,b^*,c^*,d^*$ }\\ \hline
\multicolumn{5}{c}{ $n=22$ }\\ \hline
1 & $076537321212$ & $16156871224$ & $5,5,6,2$ & $7,3,4,-4$ \\
2 & $076212641431$ & $16353377225$ & $9,1,2,2$ & $3,7,-4,-4$ \\
3 & $076487121512$ & $16337381132$ & $3,7,4,4$ & $9,1,2,2$ \\
4 & $076414343562$ & $16557178513$ & $1,5,0,8$ & $7,-1,-6,-2$ \\
5 & $076414343562$ & $16561764357$ & $1,5,0,8$ & $7,-1,-6,-2$ \\
6 & $076414378212$ & $16617767256$ & $1,5,0,8$ & $7,-1,6,2$ \\
7 & $076435641411$ & $16761544847$ & $5,5,-2,6$ & $7,3,-4,-4$ \\
8 & $078212153261$ & $16778255254$ & $9,-3,0,0$ & $-1,7,2,-6$ \\
9 & $078435173511$ & $16787588663$ & $1,5,-8,0$ & $7,-1,-2,-6$ \\
10 & $076512676432$ & $17633151578$ & $1,5,0,8$ & $7,-1,-2,6$ \\
11 & $076515671481$ & $17675144618$ & $1,5,0,8$ & $7,-1,2,6$ \\
12 & $076782121711$ & $17652175378$ & $5,5,-2,6$ & $7,3,4,-4$ \\
13 & $071567356562$ & $18767883255$ & $-1,7,-6,-2$ & $9,-3,0,0$ \\
14 & $071584328782$ & $18768533758$ & $-5,-1,-8,0$ & $1,-7,2,-6$ \\
15 & $056414173761$ & $11868766736$ & $3,7,-4,4$ & $9,1,2,2$ \\
16 & $058512141532$ & $11635676523$ & $7,3,4,4$ & $5,5,-6,2$ \\
17 & $058512328781$ & $11637254662$ & $1,-7,6,2$ & $-5,-1,0,8$ \\
18 & $051715853212$ & $16187155327$ & $5,5,2,6$ & $7,3,-4,-4$ \\
19 & $051265126462$ & $16534782626$ & $9,1,2,-2$ & $3,7,4,4$ \\
20 & $051265128432$ & $16535712626$ & $7,-1,6,2$ & $1,5,0,8$ \\
21 & $051284651712$ & $16576127148$ & $5,5,2,6$ & $7,3,-4,4$ \\
22 & $051234648212$ & $16654867176$ & $7,-1,-2,6$ & $1,5,8,0$ \\
23 & $051464641232$ & $16675487723$ & $7,3,-4,4$ & $5,5,-6,2$ \\
24 & $051265673412$ & $16723155718$ & $5,5,2,6$ & $7,3,-4,-4$ \\
25 & $053482826781$ & $16383573582$ & $-1,-9,-2,-2$ & $-7,-3,-4,-4$ \\
26 & $053465151281$ & $16537353721$ & $7,-1,2,6$ & $1,5,0,8$ \\
27 & $053265626512$ & $16712758341$ & $7,-1,2,6$ & $1,5,-8,0$ \\
28 & $053464621711$ & $16728657537$ & $7,3,-4,4$ & $5,5,-6,2$ \\
29 & $051515841782$ & $17631554474$ & $1,5,0,8$ & $7,-1,6,2$ \\
30 & $051512658432$ & $17653363147$ & $5,1,0,8$ & $3,3,6,6$ \\
31 & $051762648211$ & $17657861418$ & $7,-1,-2,6$ & $1,5,8,0$ \\
32 & $051567326261$ & $17763546681$ & $7,-1,-2,6$ & $1,5,0,-8$ \\
\hline
\end{tabular} \\
\end{center}

\begin{center}
\begin{tabular}{|r|l|l|r|r|}
\multicolumn{5}{c}{Table 1 (continued)} \\ \hline 
\multicolumn{1}{|c|}{} & \multicolumn{1}{c|}{$A$ \& $B$} & 
\multicolumn{1}{c|}{$C$ \& $D$} & \multicolumn{1}{c|}{$a,b,c,d$} &
\multicolumn{1}{c|}{ $a^*,b^*,c^*,d^*$ }\\ \hline
\multicolumn{5}{c}{ $n=24$ }\\ \hline
1 & $0765321785873$ & $161653745512$ & $-5,1,6,6$ & $3,-7,-6,2$ \\
2 & $0764156484370$ & $165371748678$ & $-1,5,-6,6$ & $7,-3,-6,2$ \\
3 & $0767621532650$ & $176577612445$ & $5,3,0,8$ & $5,3,8,0$ \\
4 & $0767328512140$ & $176833835874$ & $5,3,-8,0$ & $5,3,0,8$ \\
5 & $0715653785350$ & $187677653446$ & $-1,5,-6,6$ & $7,-3,-2,-6$ \\
6 & $0737626512340$ & $186537753131$ & $5,3,0,8$ & $5,3,0,-8$ \\
7 & $0734876235823$ & $188663535787$ & $-3,-5,-8,0$ & $-3,-5,0,8$ \\
8 & $0512653237623$ & $165353436747$ & $7,-3,-2,6$ & $-1,5,6,6$ \\
9 & $0532651262673$ & $167167854247$ & $7,-3,-2,6$ & $-1,5,6,-6$ \\
10 & $0515178265340$ & $176765741452$ & $5,3,0,8$ & $5,3,0,8$ \\
11 & $0517646732123$ & $176654163667$ & $5,3,0,8$ & $5,3,-8,0$ \\
12 & $0512158564370$ & $178868726547$ & $5,3,-8,0$ & $5,3,8,0$ \\ \hline
\multicolumn{5}{c}{ $n=26$ }\\ \hline
1 & $07641487843412$ & $1654611475266$ & $-3,5,6,6$ & $7,-5,-4,4$ \\
2 & $05128264656262$ & $1654776852733$ & $7,-5,-4,4$ & $-3,5,-6,6$ \\
3 & $05126265841481$ & $1782657541321$ & $7,-5,4,4$ & $-3,5,6,-6$ \\ \hline
\multicolumn{5}{c}{ $n=28$ }\\ \hline
1 & $076534321432170$ & $16178852836758$ & $7,5,-6,-2$ & $7,5,-2,6$ \\
2 & $076232648787870$ & $16354457772331$ & $-7,-1,0,8$ & $1,-9,-4,-4$ \\
3 & $078232123565140$ & $16538735377542$ & $9,-1,-4,4$ & $1,7,0,-8$ \\
4 & $078215348487673$ & $16754388724478$ & $-7,-1,-8,0$ & $1,-9,4,-4$ \\
5 & $078214148264370$ & $16767651613356$ & $3,1,2,10$ & $3,1,-10,-2$ \\
6 & $076512326587853$ & $17635447785113$ & $-1,-3,-2,10$ & $-1,-3,2,10$ \\
7 & $076514146435673$ & $17655216547871$ & $1,7,0,8$ & $9,-1,-4,4$ \\
8 & $076537321737843$ & $17661856774521$ & $-5,5,0,8$ & $7,-7,0,-4$ \\
9 & $076582151735173$ & $17727186654441$ & $1,7,0,8$ & $9,-1,4,-4$ \\
10 & $078517356737323$ & $17262157855212$ & $-5,5,8,0$ & $7,-7,-4,0$ \\
11 & $078235123464120$ & $17863835255348$ & $9,-1,-4,-4$ & $1,7,0,-8$ \\
12 & $071564621714873$ & $18637255448877$ & $1,7,-8,0$ & $9,-1,4,4$ \\
13 & $071564148764650$ & $18667117672553$ & $1,7,0,8$ & $9,-1,4,4$ \\
14 & $071287651232320$ & $18876654763441$ & $9,-1,-4,4$ & $1,7,-8,0$ \\
15 & $053465153484843$ & $16754378876583$ & $-1,-3,-10,2$ & $-1,-3,10,-2$ \\
16 & $051765146467353$ & $17631822512665$ & $1,7,8,0$ & $9,-1,4,4$ \\
17 & $051762846767140$ & $17654781165581$ & $1,7,0,8$ & $9,-1,4,-4$ \\
18 & $051784828462343$ & $17656316516487$ & $-1,-7,0,8$ & $-5,-3,4,8$ \\
19 & $051782353215153$ & $17678365277211$ & $7,1,0,8$ & $3,5,8,4$ \\
20 & $051567121285343$ & $17765468271156$ & $7,1,0,8$ & $3,5,-8,4$ \\
\hline
\end{tabular} \\
\end{center}

\begin{center}
\begin{tabular}{|r|l|l|r|r|}
\multicolumn{5}{c}{Table 1 (continued)} \\ \hline 
\multicolumn{1}{|c|}{} & \multicolumn{1}{c|}{$A$ \& $B$} & 
\multicolumn{1}{c|}{$C$ \& $D$} & \multicolumn{1}{c|}{$a,b,c,d$} &
\multicolumn{1}{c|}{ $a^*,b^*,c^*,d^*$ }\\ \hline
\multicolumn{5}{c}{ $n=30$ }\\ \hline
1  & $0782321435141431$ & $167587656743842$ & $9,1,-6,2$ & $3,7,8,0$ \\
2  & $0784151482828782$ & $167835857653471$ & $-5,-5,-6,6$ & $-3,-7,0,-8$ \\
3  & $0784351765121731$ & $167838232233854$ & $3,7,0,-8$ & $9,1,2,-6$ \\
4  & $0767641512178561$ & $176536611456768$ & $3,7,0,8$ & $9,1,-6,-2$ \\
5  & $0564376515151581$ & $118772615545132$ & $5,5,6,6$ & $7,3,8,0$ \\
6 & $0512656235371531$ & $165711846213678$ & $9,1,2,6$ & $3,7,0,8$ \\
7 & $0534678534348481$ & $165344387727573$ & $-5,-5,-6,6$ & $-3,-7,-8,0$ \\
8 & $0532678482348461$ & $167165812256464$ & $-1,-9,6,2$ & $-7,-3,0,-8$ \\
9 & $0515153564821232$ & $177863718512664$ & $9,1,-2,6$ & $3,7,8,0$ \\
\hline
\end{tabular} \\
\end{center}

\end{document}